\newcommand{\autorefcheckize}[1]{%
  \expandafter\let\csname @@\string#1\endcsname#1%
  \expandafter\DeclareRobustCommand\csname relax\string#1\endcsname[1]{%
    \csname @@\string#1\endcsname{##1}\wrtusdrf{##1}}%
  \expandafter\let\expandafter#1\csname relax\string#1\endcsname
}
\theoremstyle{plain}
\newtheorem{theorem}{Theorem}[section]
\newaliascnt{lem}{theorem}
\newtheorem{lem}[lem]{Lemma}
\newaliascnt{cor}{theorem}
\newtheorem{cor}[cor]{Corollary}
\newaliascnt{prop}{theorem}
\newtheorem{conj}{Conjecture}
\theoremstyle{remark}
\theoremstyle{definition}
\newtheorem{eg}{Example}[section]
\numberwithin{equation}{section}
\newcommand{\abs}[1]{\left\lvert#1\right\rvert}
\newcommand{\set}[1]{\left\{#1\right\}}
\newcommand{\hin}[2]{\left\langle#1,#2\right\rangle}
\newcommand*{\To}{\longrightarrow}
\newcommand*{\Rmn}[1]{\uppercase\expandafter{\romannumeral#1}}
\newcommand*{\dif}{\mathop{}\!\mathrm{d}}
\journal{XXX}
\begin{document}

\begin{frontmatter}

\title{Rigidity theorems for minimal Lagrangian surfaces with  Legendrian capillary boundary\tnoteref{ls}}

\author[whu1,whu2]{Yong Luo}
\ead{yongluo@whu.edu.cn}

\author[whu1,whu2]{Linlin Sun\corref{sll}}
\ead{sunll@whu.edu.cn}

\address[whu1]{School of Mathematics and Statistics, Wuhan University, 430072 Wuhan, Hubei, China}
\address[whu2]{Hubei Key Laboratory of Computational Science, Wuhan University, 430072 Wuhan, Hubei, China}


\cortext[sll]{Corresponding author.}

\tnotetext[ls]{After our proofs of \autoref{mainthm:2} was completed, we learned from Professor Guofang Wang  that he had also noticed that the boundary of minimal Lagrangian surfaces in $\mathbb{B}^4$ with Legendrian capillary boundary on $\mathbb{S}^3$ are great circles and then he got an idea to prove Conjecture 1 which could be quite standard. The authors would like to thank him for the encouragement for us to complete and submit our paper. Many thanks to Dr.  Qing Cui, Jiabin Yin and  Jingyong Zhu for their interests in this paper and discussions. This research is partially supported by the National Natural Science Foundation of China  (Grant Nos. 11971358, 11801420) and the Youth Talent Training Program of Wuhan University. The second author also would like to express his gratitude to Professor J\"urgen Jost for his invitation to MPI MIS for their hospitality.}

\begin{abstract}
In this note, we study minimal Lagrangian surfaces in $\mathbb{B}^4$ with Legendrian capillary boundary on $\mathbb{S}^3$. On the one hand, we prove that any minimal Lagrangian surface in $\mathbb{B}^4$ with Legendrian free boundary on $\mathbb{S}^3$ must be an equatorial plane disk. One the other hand, we show that any annulus type minimal Lagrangian surface in $\mathbb{B}^4$ with Legendrian capillary boundary on $\mathbb{S}^3$ must be congruent to one of the Lagrangian catenoids. These results confirm the conjecture proposed by Li, Wang and Weng (Sci. China Math., 2020). 
 
\end{abstract}

\begin{keyword}
Minimal Lagrangian surface \sep Legendrian capillary boundary
 \sep Lagrangian catenoid

 \MSC[2010] 53C24\sep 53C42

\end{keyword}

\end{frontmatter}

\section{Introduction}
Let $\mathbb{C}^n=\mathbb{R}^{2n}$ be the standard complex plane with its canonical K\"ahler form $\omega$ and almost complex structure $J$. Let $\mathbb{S}^{2n-1}$ be the $ (2n-1)$-dimensional unit sphere with standard Sasakian structure. Then an $n$-dimensinal submanifold $\Sigma^n$ in $\mathbb{C}^n$ is called a Lagrangain submanifold if $JT\Sigma^n=T^{\bot}\Sigma^n$, where $T^{\bot}\Sigma^n$ denotes the normal space of $\Sigma^n$ in $\mathbb{C}^n$, and an $(n-1)$ dimensional submanifold $K^{n-1}$ in $\mathbb{S}^{2n-1}$ is called a Legendrian submanifold if $\mathbf{R}\perp TK^{n-1}$, where $\mathbf{R}$ is the Reeb field of $\mathbb{S}^{2n-1}$ with $\mathbf{R}(x)=Jx$ for every $x\in \mathbb{S}^{2n-1}$.

It is well known that Lagrangian submanifolds in a complex space form have many similarities with hypersurfaces in a real space form. Recently, inspired by the study of capillary hypersurfaces $M$ in $\mathbb{B}^{n+1}\subset\mathbb{R}^{n+1}$, which have constant mean curvature, non-empty boundary such that $\mathring{M}\subset \mathring{\mathbb{B}}^{n+1}$ and $\partial M\subset\partial\mathbb{B}^{n+1}=\mathbb{S}^n$, which intersect $\partial\mathbb{B}^{n+1}$ with a constant angle, Li, Wang and Weng \cite{LWW} initiated the very interesting study of Lagrangian submanifolds with Legendrian capillary boundary in $\mathbb{B}^{2n}\subset\mathbb{C}^n$. 

First let us recall some definitions introduced in \cite{LWW}. Let $x: \Sigma^n \to\mathbb{B}^{2n}$ be a Lagrangian submanifold with $\partial\Sigma^n\subset\partial\mathbb{B}^{2n}=\mathbb{S}^{2n-1}$ being a Legendrian submanifold. Li, Wang and Weng observed that the unit normal $\nu$ at $x\in\partial\Sigma^n\subset\Sigma^n$ lies in the plane spanned by $x$ and $ Jx$, i.e. there exists a $\theta\in[0, \pi)$ such that
$$\nu=\sin\theta x+\cos\theta Jx.$$
The angle $\theta$ is called a contact angle and $\Sigma^n$ is called a Lagrangian submanifold with
Legendrian capillary boundary (or simply capillary Lagrangian submanfold), if the
contact angle is a local constant. When $\theta=\frac{\pi}{2}$, $\Sigma^n$ is called  a Lagrangian submanifold with Legendrian free boundary, or a free boundary Lagrangian submanifold.

When $n=2$, typical examples of minimal Lagrangian surfaces in $\mathbb{B}^4$ with Legendrian capillary boundary are the equatorial plane disk and the Lagrangian catenoids, as discussed in \cite{LWW} (see also \autoref{exam}). Note that the contact angle for the equatorial plane disk is $\frac{\pi}{2}$, but the contact angle for Lagrangian catenoids are constants which are not equal to $\frac{\pi}{2}$. Li, Wang and Weng \cite{{LWW}} got the following Nitche (or Hopf) type rigidity theorem.
\begin{theorem}[Li, Wang and Weng]\label{LWW}
Given $D:=\set{(x_1, x_2): x_1^2+x_2^2\leq1}$.  Let $x: D\To \mathbb{B}^4$ be a (branched) minimal Lagrangian surface with Legendrian capillary boundary on $\mathbb{S}^3$. Then $x(D)$ is an equatorial plane disk.
\end{theorem}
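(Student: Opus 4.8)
The plan is to attach to $x$ the holomorphic cubic Hopf differential of a minimal Lagrangian surface and run a Nitsche/Hopf-type argument on the disk. Fix a global conformal coordinate $z$ on $D$ and write the induced metric (possibly degenerate at the interior branch points) as $e^{2\phi}\abs{dz}^{2}$. Since $x(\Sigma)$ is Lagrangian in the Kähler manifold $\mathbb{C}^{2}$, the tensor $C(X,Y,Z):=\hin{\mathrm{II}(X,Y)}{JZ}$ is fully symmetric in $X,Y,Z$ (using $J T\Sigma=N\Sigma$ and the Kähler condition); set $\Phi:=C(\partial_{z},\partial_{z},\partial_{z})\,dz^{3}=:\Psi(z)\,dz^{3}$. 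A standard computation from the Codazzi equation together with $\overline{\nabla}J=0$ and $\trace_{g}\mathrm{II}=0$ shows that, on a minimal Lagrangian surface in the flat (hence Kähler--Einstein) space $\mathbb{C}^{2}$, the differential $\Phi$ is holomorphic, i.e.\ $\Psi$ is holomorphic on the immersed locus; as the branch points are isolated interior points of complex-analytic type, $\Psi$ extends holomorphically across them, and by the standard boundary regularity of $x$ it is continuous on $\overline{D}$. Observe that $C\equiv 0$ forces $\mathrm{II}\equiv 0$, because $\set{JZ:Z\in T\Sigma}$ spans $N\Sigma$; hence it suffices to prove $\Psi\equiv 0$.

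Next I would read off the boundary values of $\Psi$. Along $\partial\Sigma=x(\partial D)$ choose the orthonormal frame $e_{1}$ (unit tangent to the curve $\partial\Sigma$) and $e_{2}=\nu$ (outward conormal of $\Sigma$), so that $Je_{1},Je_{2}$ is an orthonormal frame of $N\Sigma$. The Legendrian hypothesis gives $\hin{e_{1}}{x}=\hin{e_{1}}{Jx}=0$, and (since $\partial D$ is connected) the capillary hypothesis gives $\nu=\sin\theta\,x+\cos\theta\,Jx$ with $\theta$ \emph{constant}. Using $\overline{\nabla}_{e_{1}}x=e_{1}$, $\overline{\nabla}J=0$ and the flatness of $\mathbb{C}^{2}$, one computes the frame components of $\overline{\nabla}_{e_{1}}e_{1}$ and $\overline{\nabla}_{e_{1}}e_{2}$ and extracts
\[
\hin{\mathrm{II}(e_{1},e_{1})}{Je_{1}}=\theta'=0,\qquad
\hin{\mathrm{II}(e_{1},e_{1})}{Je_{2}}=\cos\theta ,
\]
where $\theta'$ is the derivative of $\theta$ along $\partial\Sigma$ (this is where constancy of the contact angle enters). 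Completing the remaining components of $C$ by minimality and rewriting $\partial_{z}$ in the boundary frame $e_{1},e_{2}$, one finds that on $\partial D$, with $z=e^{is}$,
\[
z^{3}\Psi(z)=-\tfrac{1}{2}\,e^{3\phi}\cos\theta\in\mathbb{R};
\]
equivalently, the holomorphic cubic differential restricted to $\partial D$ takes values in one fixed real line.

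With this in hand the conclusion is quick. Put $G(z):=z^{3}\Psi(z)$: it is holomorphic on $\mathring D$, continuous on $\overline{D}$, and $\mathrm{Im}\,G\equiv 0$ on $\partial D$. Hence the harmonic function $\mathrm{Im}\,G$ vanishes identically, so $G$ is a real constant; since $G(0)=0$ we get $G\equiv 0$ and therefore $\Psi\equiv 0$ on $\mathring D$. (Equivalently, Schwarz reflection across $\set{\abs{z}=1}$ turns $G$ into a bounded entire function, which is constant.) Consequently $\mathrm{II}\equiv 0$, so $x(D)$ is totally geodesic in $\mathbb{C}^{2}$, hence contained in an affine $2$-plane $P$ whose direction plane is Lagrangian; feeding this flat disk back into the relation $\nu=\sin\theta\,x+\cos\theta\,Jx$ (the component of $\nu$ normal to $P$ must vanish along the boundary circle) forces $P$ to pass through the origin and $\theta=\tfrac{\pi}{2}$, so $x(D)=P\cap\mathbb{B}^{4}$ is an equatorial plane disk.

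The step I expect to be the main obstacle is the boundary computation: one must squeeze out of the simultaneous Legendrian, capillary and minimality conditions the vanishing of $\hin{\mathrm{II}(e_{1},e_{1})}{Je_{1}}$ on $\partial D$ --- equivalently, the fact that $\Phi$ is "real" along $\partial D$ --- since this is exactly what powers the maximum-principle step. On the analytic side, the only other delicate point is the holomorphic extension of $\Psi$ across the interior branch points and its continuity up to $\partial D$ (which relies on the assumed boundary regularity of $x$); granting these, the complex-analysis endgame is routine.
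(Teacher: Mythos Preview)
The paper does not prove this statement; it is quoted from \cite{LWW} as a known result, with the remark that it ``was proved by the technique of Hopf's holomorphic cubic form.'' Your argument is precisely that technique and is correct. The boundary identities you extract --- $\hin{\mathrm{II}(e_1,e_1)}{Je_1}=0$ from constancy of $\theta$, and $\hin{\mathrm{II}(e_1,e_1)}{Je_2}=\cos\theta$ --- are exactly the relations the paper records in its Section~2 (namely $\nabla^\partial\theta=J\mathbf{B}(\nu,\nu)$ and $\hin{\mathbf{B}(X,Y)}{J\nu}=\cos\theta\,\hin{X}{Y}$) and then reuses, via the same holomorphic cubic $Q$, in the proofs of its own Theorems. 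Your Nitsche endgame ($z^3\Psi$ real on $\partial D$, hence a real constant, hence vanishing at $z=0$, hence $\Psi\equiv0$) is the standard one.

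One small comment on the last step: once $\Sigma$ lies in an affine Lagrangian plane $P=p_0+V$ with $p_0\perp V$, the \emph{Legendrian} boundary condition alone already forces $p_0=0$. Indeed $p_0\in V^\perp=JV$ gives $Jp_0\in V$, and since $V$ is Lagrangian the requirement $\hin{Jx}{e_1}=0$ along the boundary circle reduces to $\hin{Jp_0}{e_1}=0$ for every tangent direction $e_1\in V$, so $Jp_0\perp V$ and hence $p_0=0$; then $\nu=x$ and $\theta=\pi/2$ follow automatically. Your route through the $V^\perp$-component of $\nu=\sin\theta\,x+\cos\theta\,Jx$ also works, but invokes the capillary relation where the weaker Legendrian hypothesis already suffices.
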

This theorem is the Lagrangian counterpart of related results for capillary surfaces in $\mathbb{B}^n$ by Nitsche \cite{Nit}, Ros and Souam \cite{RS} and Fraser and Schoen \cite{FS}. Then they conjectured that: 
\begin{quote}
    There is no annulus type minimal Lagrangian surface with Legendrian free boundary.
\end{quote} 
Moreover, they wrote down the following (\cite[Conjecture 2.16]{LWW}).

\begin{conj}\label{conj}
Any embedded annulus type minimal Lagrangian surface
with Legendrian capillary boundary on $\mathbb{S}^3$ is one of the Lagrangian catenoids.
\end{conj}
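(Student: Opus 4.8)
The plan is to reduce \autoref{conj} to the structure theory of minimal Lagrangian surfaces in $\mathbb{C}^2$ together with a careful analysis of the Legendrian capillary boundary condition, following the strategy suggested by the footnote. First I would recall that a minimal Lagrangian surface $\Sigma^2\subset\mathbb{C}^2$ is, after composing with the standard complex structure, a \emph{special Lagrangian} surface (up to a phase), and in particular the induced metric is flat away from branch points by the Ricci equation, and the surface carries a holomorphic structure in which the immersion is given by a pair of holomorphic functions together with the Lagrangian angle. Concretely, for a conformal parametrization $x\colon\Omega\to\mathbb{C}^2$ from an annulus $\Omega=\{r<|z|<R\}$, minimality plus Lagrangian forces the position vector and the Gauss map to satisfy a first-order holomorphic system; the cone structure (Lagrangian catenoid, being asymptotically the Lagrangian cone over the Clifford torus, or more precisely over a great circle) should emerge from the boundary conditions on the two ends.

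The key steps, in order, would be: (1) write the conformal structure on the annulus and express the minimal Lagrangian immersion in terms of holomorphic data; (2) translate the Legendrian capillary boundary condition $\nu=\sin\theta\, x+\cos\theta\, Jx$ with $\theta$ constant on each boundary circle into a boundary condition for this holomorphic data — the crucial observation (already noted in the footnote, and implicit in the discussion after \autoref{LWW}) is that the boundary circles of such a surface, sitting in $\mathbb{S}^3$ as Legendrian curves meeting the sphere at a constant angle, must be \emph{great circles}; (3) use that each boundary component is a great circle Legendrian curve to pin down the holomorphic data — a minimal Lagrangian annulus with both boundary curves great circles in $\mathbb{S}^3$ and constant contact angle is rigid; (4) invoke the uniformization/Hopf-differential argument (as in \autoref{LWW} and the Nitsche–Ros–Souam–Fraser–Schoen circle of ideas) to show the only surviving moduli are those of the Lagrangian catenoid family, matching the contact angle; and (5) check that the embeddedness hypothesis rules out any multiply-covered or degenerate solutions and fixes the catenoid up to congruence.

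I expect step (2)–(3) to be the main obstacle: proving that the boundary curves are great circles, and then showing that this, together with the minimal Lagrangian PDE on the interior, forces the whole surface into the catenoid family. The "great circle" claim should follow from applying \autoref{LWW}-type reasoning to the disk one would get by capping off, or more directly from the structure equations restricted to $\partial\Sigma$ — the Legendrian condition kills the Reeb component of the tangent, the capillary condition makes $\theta$ locally constant, and minimality forces the boundary geodesic curvature in $\mathbb{S}^3$ to a specific value, leaving only great circles. Once the boundary curves are identified, the remaining analysis is the genuinely computational part: one integrates the holomorphic data with the now-explicit boundary values and recognizes the Lagrangian catenoid, where the flatness of the induced metric and the annular topology conspire to make the period conditions solvable only for the catenoid's one-parameter family. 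A secondary subtlety will be handling possible branch points in the interior; here I would argue, as in the branched version of \autoref{LWW}, that a minimal Lagrangian annulus with Legendrian capillary boundary cannot have interior branch points, or that branch points would contradict embeddedness.
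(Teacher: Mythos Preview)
Your outline hits the same two load-bearing observations as the paper: (i) each boundary component is a Legendrian great circle in $\mathbb{S}^3$ (this is the paper's \autoref{lem:2.2}, deduced from \eqref{eq:1.7}), and (ii) the holomorphic Hopf cubic form is the right invariant. Where your plan stays schematic, the paper is concrete, and the mechanism is worth spelling out because your version of step (4) does not quite name it. In conformal polar coordinates $z=re^{\sqrt{-1}\phi}$ on the annulus, the scalar function $z^{3}\hin{\mathbf{B}(\partial_{z},\partial_{z})}{J\partial_{z}}$ is holomorphic, and its imaginary part equals $-\tfrac{1}{2}\hin{\mathbf{B}(\partial_{\phi},\partial_{\phi})}{J\partial_{\phi}}$, which vanishes on $\partial\Sigma$ exactly because the boundary is a Legendrian geodesic. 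By the maximum principle this function is therefore a \emph{real constant} $c\neq 0$ (were $c=0$ one would be in the disk case of \autoref{mainthm:1}); evaluating on the two boundary circles gives $\cos\theta_{+}+\cos\theta_{-}=0$ and fixes the catenoid parameter $\lambda$. Next, since $\Delta_{0}X=0$ on the flat annulus and the Dirichlet data on each boundary circle is a first Fourier mode (the great-circle parametrization $\vec{a}_{\pm}\cos\phi+\vec{b}_{\pm}\sin\phi$), the harmonic extension is explicit:
\[
X(r,\phi)=\Bigl(\vec{a}\,r+\tfrac{\lambda\vec{b}}{r}\Bigr)\cos\phi+\Bigl(\vec{c}\,r-\tfrac{\lambda\vec{d}}{r}\Bigr)\sin\phi.
\]
The conformality and Lagrangian constraints then reduce to a finite list of inner-product identities among $\vec{a},\vec{b},\vec{c},\vec{d}$, which a direct computation shows are exactly the relations defining $\Sigma_{\lambda}$ up to a unitary motion. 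No ``period conditions'' are solved; the argument is linear algebra after the maximum principle step.

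Two corrections to your plan. First, the claim that the induced metric is flat away from branch points is false: a minimal Lagrangian surface in $\mathbb{C}^2$ has $K=-\tfrac{1}{2}\abs{\mathbf{B}}^{2}\leq 0$, and the Lagrangian catenoid itself is not flat. So the ``flatness plus periods'' route, as stated, does not get off the ground. Second, embeddedness is never used in the paper's proof (nor is any branch-point analysis needed once one works on an honest annulus); the theorem holds for immersed annuli, so your step (5) is unnecessary.
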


This conjecture is the Lagrangian counterpart of the conjecture  for free boundary minimal surfaces in $\mathbb{B}^3$ proposed by Fraser and Li \cite{FL}.
\begin{conj}[Fraser-Li]\label{conj2}
The critical catenoid is the unique embedded free boundary minimal annulus in $\mathbb{B}^3$.
\end{conj}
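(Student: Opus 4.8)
The plan is to follow the now-standard philosophy for free boundary rigidity problems in the ball: first encode the free boundary minimal condition as an overdetermined eigenvalue problem via the position vector, then use it to force rotational symmetry, and finally classify the rotationally symmetric solutions. Let $\Sigma\subset\mathbb{B}^3$ be an embedded free boundary minimal annulus. Since $\Sigma$ is minimal, each coordinate function $x_i$ is harmonic on $\Sigma$ (the position map satisfies $\Delta_\Sigma x=\vec H=0$), and the free boundary condition, namely orthogonal intersection with $\mathbb{S}^2$, forces the outward conormal to satisfy $\nu=x$ along $\partial\Sigma$, so that $\partial_\nu x_i=x_i$. Hence the components of the position vector are Steklov eigenfunctions with eigenvalue $1$, as observed by Fraser and Schoen \cite{FS}. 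I would begin by recording the conformal structure: an annulus is conformally a flat cylinder $[0,T]\times\mathbb{S}^1$, and a minimal immersion is conformal harmonic, so the whole problem becomes a boundary value problem for a conformal harmonic map off the cylinder whose two ends map to the two boundary circles on $\mathbb{S}^2$, with the Steklov condition on each end.

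The decisive step is to prove that $\Sigma$ is a surface of revolution, and I would attempt this in two stages. First, I would produce a reflection symmetry interchanging the two boundary components: using the balancing and first-variation identities for free boundary minimal surfaces together with embeddedness and the annulus topology, one expects a totally geodesic plane $\Pi$ through the origin meeting $\Sigma$ in a single closed geodesic and reflecting $\Sigma$ onto itself, after which Schwarz reflection upgrades this to an honest isometric symmetry. Second, and this is the genuine crux, I would try to promote this single reflection to the full rotational symmetry group. The natural route is to show that the first nonzero Steklov eigenvalue of $\Sigma$ equals exactly $1$ with multiplicity three, so that the coordinate functions span the whole first eigenspace; a rigidity argument for this eigenspace, analogous to the Montiel--Ros analysis in the closed minimal setting, would then force the immersion to factor through the orbits of a circle action.

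Once rotational symmetry is in hand the classification is routine: a rotationally symmetric minimal surface in $\mathbb{R}^3$ is an open piece of a catenoid $x^2+y^2=a^2\cosh^2\!\big((z-z_0)/a\big)$, the plane being excluded on topological grounds since it meets $\mathbb{B}^3$ in a disk, not an annulus. Imposing the orthogonality condition $\langle x,\nu\rangle=0$ simultaneously at both boundary circles on $\mathbb{S}^2$ then pins down the parameters $a$ and $z_0$ uniquely up to the ambient isometries preserving $\mathbb{B}^3$, which yields exactly the critical catenoid.

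I expect the symmetry step to be the main obstacle, and it is indeed the reason the conjecture remains open. Unlike the Alexandrov problem for constant mean curvature hypersurfaces, there is no enclosed-volume constraint available to drive a moving-planes argument, and the Steklov eigenvalue $1$ is a priori only \emph{an} eigenvalue, not known to be the first one nor of controlled multiplicity. Ruling out lower Steklov eigenvalues and excluding extra first eigenfunctions requires genuinely new input, such as sharp lower bounds for the Steklov spectrum of free boundary minimal annuli or a refined nodal-domain analysis adapted to the cylindrical conformal model; supplying either of these is where the real difficulty lies.
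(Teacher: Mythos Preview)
The paper does not prove this statement: it is recorded there as the Fraser--Li \emph{conjecture} and is explicitly described as open. The paper's contribution is to prove the \emph{Lagrangian} analogue (\autoref{conj}), and the authors themselves remark that it is a surprise that the Lagrangian version can be settled while Fraser--Li's original conjecture in $\mathbb{B}^3$ remains unresolved. So there is no ``paper's own proof'' to compare your proposal against.

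As for the proposal itself, you have correctly diagnosed the situation. The reduction to a Steklov eigenvalue problem and the classification of rotationally symmetric free boundary minimal annuli are standard and unproblematic; the entire content of the conjecture lies in the step you flag as the ``decisive'' one and then, in your final paragraph, admit you cannot carry out: forcing rotational symmetry. Neither the balancing/moving-planes idea for a reflection plane nor the claim that $\sigma_1=1$ with multiplicity exactly three is currently known for an arbitrary embedded free boundary minimal annulus, and your sketch does not supply a mechanism for either. So this is not a proof but an outline of where the difficulty sits, which you state accurately. It is worth noting, as the paper does, that the Lagrangian case succeeds precisely because the Legendrian capillary boundary condition forces each boundary component to be a great circle (\autoref{lem:2.2}), which substitutes for the missing symmetry input; no analogous boundary rigidity is available in $\mathbb{B}^3$.
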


In this paper, we first show  that Lagrangian minimal surfaces in $\mathbb{B}^4$ with Legendrian free boundary must be an equatorial plane disk (\autoref{mainthm:1}), which extends  \autoref{LWW} in the Legendrian free boundary case and confirms the statement:
\begin{quote}
    There is no annulus type minimal Lagrangian surface with Legendrian free boundary.
\end{quote}
Finally, we give an affirmative answer to \autoref{conj}. Actually, we prove  that \autoref{conj} is true without the embeddedness assumption (\autoref{mainthm:2}).

As it is well known, hypersurfaces in a real space form have many similarities with Lagrangian submanifolds in a complex space from, and many rigidity results for minimal hypersurfaces in a real space form have their Lagrangian counterparts. But according to our knowledge,   rigidity results in the Lagrangian submanifolds case are always much more complicated and their proofs (if they exist) need more job. Consequently, although some rigidity results are true for minimal hypersurfaces in a real space form, their Lagrangian counterparts are still open. For example  Brendle \cite{Bre} proved the longstanding Lawson's conjecture, which states that the Clifford torus is the unique embedded minimal tori in $\mathbb{S}^3$. But its Lagrangian counterpart, that is if embedded minimal Lagrangian tori in $\mathbb{CP}^2$ are given by the examples constructed by Haskins \cite{Has} with certain symmetry (see also \cite{CU, Jo}), remains widely  open. Another example is the conjecture given by the authors  (\cite[Conjecture 1]{LS}) of this paper on the first pinching constant of closed minimal Lagrangian submanifolds in $\mathbb{CP}^n$, while the case of closed minimal hypersurfaces was established by Simons \cite{Li}, Chern, do Carmo and Kobayashi \cite{CCK} and Lawson \cite{La}.

Bewaring of this, it would be a surprise for us to see that though Fraser and Li's conjecture, i.e.  \autoref{conj2}, remains open, but its Lagrangian counterpart, i.e.  \autoref{conj}, could be verified. The above mentioned Nitsche (or Hopf) type rigidity results for capillary surfaces \cite{FS, Nit, RS} and  \autoref{LWW} were proved by the technique of Hopf's holomorphic cubic form. While in our proof of \autoref{conj} we use simultaneously Hopf's holomorphic  cubic form and a maximum principle for surfaces with boundary, which is quite subtle. The main observation is that, the boundary of a minimal Lagrangian submanifold in $\mathbb{B}^{2n}$ with Legendrian capillary boundary on $\mathbb{S}^{2n-1}$ is still minimal (see \autoref{lem:2.2}), which enable us to use the maximum principle. It would be very interesting to see if this method is workable for Fraser  and Li's conjecture, by exploring more boundary properties of the critical catenoid. Here we would like to point out that Li \cite{Li} observed that by a Bj\"orling-type uniqueness result for free boundary minimal surfaces of Kapouleas and Li \cite{KL}, to prove Fraser and Li's conjecture, it suffices to show that one of the boundary components of the minimal annulus is rotationally symmetric. We invite the readers who desire more information on Fraser and Li's conjecture to consult the recent excellent surveys by Li \cite{Li} and Wang and Xia \cite{XW} and references therein. See also Fraser and Schoen \cite{FS2} for a very deep characterization of the critical cateniod.

The rest of this paper is organized as follows. In section 2 we give some properties of the Legendrian boundary and contact angle. Main results of this paper and their proofs are given in section 3.

\section{Properties of the Legendrian boundary and contact angle}

Let $x:\Sigma^{n}\To\mathbb{B}^{2n}$ be an immersed  Lagrangian submanifold with boundary $\partial \Sigma^{n}$ on the unit round sphere $\mathbb{S}^{2n-1}$.  Let  $\nu$ be the unit outward normal vector field of  $\partial \Sigma^{n}\hookrightarrow \Sigma^{n}$. Since $\Sigma^n$ is a Lagrangian submanifold of $\mathbb{B}^{2n}$, on the boundary we have the following orthogonal decomposition
\begin{align*}
T\mathbb{B}^{2n}\vert_{\partial \Sigma^{n}}=&T\Sigma^{n}\vert_{\partial \Sigma^{n}}\oplus T^{\bot}\Sigma^{n}\vert_{\partial \Sigma^{n}}\\
=&T\Sigma^{n}\vert_{\partial \Sigma^{n}}\oplus JT\Sigma^{n}\vert_{\partial \Sigma^{n}}\\
=&T\partial \Sigma^{n}\oplus JT\partial \Sigma^{n}\oplus \mathrm{span}\set{\nu,J\nu}.
\end{align*}
Notice that
\begin{align*}
T\mathbb{B}^{2n}\vert_{\partial \Sigma^{n}}=&T\mathbb{S}^{2n-1}\vert_{\partial\Sigma^{n}}\oplus\mathrm{span}\set{x}\\
=&T\partial \Sigma^{n}\oplus T^{\bot}\left(\partial \Sigma^{n}\hookrightarrow\mathbb{S}^{2n-1}\right)\oplus\mathrm{span}\set{x}.
\end{align*}
Therefore $\partial\Sigma^{n}$ is a Legendrian submanifold of $\mathbb{S}^{2n-1}$ if and only if
\begin{align*}
T^{\bot}\left(\partial \Sigma^{n}\hookrightarrow\mathbb{S}^{2n-1}\right)=JT\partial \Sigma^{n}\oplus\mathrm{span}\set{Jx},
\end{align*}
if and only if
\begin{align*}
\mathrm{span}\set{\nu,J\nu}=\mathrm{span}\set{x,Jx},
\end{align*}
which is equivalent to that
\begin{align}\label{eq:nu}
\nu=\sin\theta x+\cos\theta Jx,
\end{align}
where $\theta:\partial \Sigma^{n}\To[0,\pi)$ is a smooth function. The angle $\theta$ is called a contact angle.

Let $\mathbf{B}, \mathbf{B}^{\Sigma}$ and $\mathbf{B}^{\partial}$  be the second fundamental form of the isometric  immersions $\Sigma^{n}\hookrightarrow\mathbb{B}^{2n}, \partial \Sigma^{n}\hookrightarrow\Sigma^{n}$ and $\partial\Sigma^{n}\hookrightarrow\mathbb{S}^{2n-1}$ respectively. Let $\mathbf{H}, \mathbf{H}^{\Sigma}$ and $\mathbf{H}^{\partial}$  be the mean curvature vector of the isometric  immersions $\Sigma^{n}\hookrightarrow\mathbb{B}^{2n}, \partial \Sigma^{n}\hookrightarrow \Sigma^{n}$ and $\partial \Sigma^{n}\hookrightarrow\mathbb{S}^{2n-1}$ respectively.  Finally, let $\bar\nabla, \nabla$ and $\nabla^\partial$ be the Levi-Civita connections on $\mathbb{B}^{2n}, \Sigma^{n}$ and $\partial \Sigma^{n}$ respectively. 
\begin{lem}
For all $X, Y, Z\in T\partial \Sigma^{n}$,
\begin{align}
\mathbf{B}^{\Sigma}\left(X,Y\right)=&-\sin\theta \hin{X}{Y}\nu,\label{eq:1.2}\\
\hin{\mathbf{B}\left(X,Y\right)}{J\nu}=&\cos\theta\hin{X}{Y},\label{eq:1.3}\\
\hin{\mathbf{B}\left(X,Y\right)}{JZ}=&\hin{\mathbf{B}^\partial\left(X,Y\right)}{JZ}.\label{eq:1.4}
\end{align}
Moreover,
\begin{align}
\nabla^{\partial}\theta=J\mathbf{B}\left(\nu,\nu\right).\label{eq:1.5}
\end{align}
\end{lem}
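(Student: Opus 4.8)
The plan is to work entirely along the boundary curve $\partial\Sigma^n$ and to extract each identity by decomposing the ambient Gauss formula $\bar\nabla_X Y = \nabla_X Y + \mathbf{B}(X,Y)$ according to the orthogonal splitting displayed above, namely $T\mathbb{B}^{2n}|_{\partial\Sigma^n} = T\partial\Sigma^n \oplus JT\partial\Sigma^n \oplus \mathrm{span}\{\nu, J\nu\}$, together with the fact that $\Sigma^n$ is Lagrangian (so $J$ interchanges $T\Sigma^n$ and $T^\bot\Sigma^n$ and $\bar\nabla J = 0$) and that $\partial\Sigma^n$ sits in $\mathbb{S}^{2n-1}$ as a Legendrian submanifold. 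The two ``input'' facts I would keep in front of me are: (i) for the inclusion $\Sigma^n\hookrightarrow\mathbb{B}^{2n}\subset\mathbb{R}^{2n}$ the position vector $x$ is the outward unit normal of $\mathbb{S}^{2n-1}$, so $\bar\nabla_X x = X$ for any tangent $X$; and (ii) $\nu = \sin\theta\, x + \cos\theta\, Jx$ from \eqref{eq:nu}, whence $J\nu = -\cos\theta\, x + \sin\theta\, Jx$, so that $\{x,Jx\}$ and $\{\nu,J\nu\}$ are related by a rotation by $\theta$ in their common plane.

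For \eqref{eq:1.2}: $\mathbf{B}^\Sigma(X,Y)$ is by definition the component of $\nabla_X Y$ along $\nu$, i.e. $\hin{\nabla_X Y}{\nu}\nu = \hin{\bar\nabla_X Y}{\nu}\nu$. Now $\hin{\bar\nabla_X Y}{\nu} = \hin{\bar\nabla_X Y}{x}\sin\theta + \hin{\bar\nabla_X Y}{Jx}\cos\theta$. The second term vanishes: $Jx \perp T\Sigma^n$ (Lagrangian) yet $\bar\nabla_X Y$ has vanishing $T^\bot\Sigma^n$-component paired against $Jx$ only if... more carefully, one writes $\hin{\bar\nabla_X Y}{Jx} = -\hin{Y}{\bar\nabla_X(Jx)} = -\hin{Y}{J\bar\nabla_X x} = -\hin{Y}{JX} = 0$ since $JX\in T^\bot\Sigma^n \perp Y$. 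For the first term, $\hin{\bar\nabla_X Y}{x} = -\hin{Y}{\bar\nabla_X x} = -\hin{Y}{X}$. Hence $\mathbf{B}^\Sigma(X,Y) = -\sin\theta\hin{X}{Y}\nu$. For \eqref{eq:1.3}: $\hin{\mathbf{B}(X,Y)}{J\nu} = \hin{\bar\nabla_X Y}{J\nu} = \hin{\bar\nabla_X Y}{-\cos\theta\, x + \sin\theta\, Jx}$; by the two computations just made this is $-\cos\theta\cdot(-\hin{X}{Y}) + \sin\theta\cdot 0 = \cos\theta\hin{X}{Y}$. For \eqref{eq:1.4}: for $Z\in T\partial\Sigma^n$, $JZ$ is orthogonal to $T\Sigma^n$ and to $x$ (since $\hin{JZ}{x} = -\hin{Z}{Jx} = 0$ as $Z\in T\mathbb{S}^{2n-1}$), hence $JZ$ is tangent to $\mathbb{S}^{2n-1}$ and normal to $\Sigma^n$; it is also normal to $\partial\Sigma^n$ inside $\mathbb{S}^{2n-1}$. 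So $\hin{\mathbf{B}(X,Y)}{JZ} = \hin{\bar\nabla_X Y}{JZ}$, and since $X,Y$ are tangent to $\partial\Sigma^n\subset\mathbb{S}^{2n-1}$ this equals $\hin{\bar\nabla^{\mathbb{S}}_X Y}{JZ} = \hin{\mathbf{B}^\partial(X,Y)}{JZ}$, because the difference $\bar\nabla_X Y - \bar\nabla^{\mathbb{S}}_X Y$ points along $x$, which is orthogonal to $JZ$.

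For \eqref{eq:1.5}: differentiate the defining relation $\nu = \sin\theta\, x + \cos\theta\, Jx$ along a direction $X\in T\partial\Sigma^n$ and take the inner product with $\nu$ (or argue via $J\nu$). Since $\abs{\nu} = 1$, $\hin{\bar\nabla_X\nu}{\nu} = 0$; on the other hand $\bar\nabla_X\nu = (X\theta)(\cos\theta\, x - \sin\theta\, Jx) + \sin\theta\,\bar\nabla_X x + \cos\theta\, J\bar\nabla_X x = -(X\theta)J\nu + \sin\theta\, X + \cos\theta\, JX$. Pairing with $J\nu$ and using that $X\perp J\nu$ and $\hin{JX}{J\nu} = \hin{X}{\nu} = 0$ gives nothing new; instead pair $\mathbf{B}(\nu,\nu) = (\bar\nabla_\nu\nu)^\bot$ with $JX$: one gets $\hin{\mathbf{B}(\nu,\nu)}{JX} = \hin{\bar\nabla_\nu\nu}{JX}$, and by the Codazzi-type symmetry of $\hin{\mathbf{B}(\cdot,\cdot)}{J(\cdot)}$ for Lagrangian submanifolds (the fully symmetric cubic form) this equals $\hin{\mathbf{B}(\nu,X)}{J\nu} = -\hin{\mathbf{B}(\nu,X)}{\nu}\cdot 0 + \ldots$ — the cleanest route is: $\hin{J\mathbf{B}(\nu,\nu)}{X} = -\hin{\mathbf{B}(\nu,\nu)}{JX} = -\hin{\bar\nabla_\nu\nu}{JX} = \hin{\nu}{\bar\nabla_\nu(JX)} = \hin{\nu}{J\bar\nabla_\nu X}$, and then use $\bar\nabla_X\nu$ computed above evaluated so that $\hin{\nu}{J\bar\nabla_\nu X} = -\hin{J\nu}{\bar\nabla_\nu X} = \hin{\bar\nabla_\nu(J\nu)}{X}$; substituting $J\nu = -\cos\theta\, x + \sin\theta\, Jx$ and differentiating yields a term $(\nu\theta)(\sin\theta\, x + \cos\theta\, Jx) = (\nu\theta)\nu$ plus terms orthogonal to $X$, giving $\hin{J\mathbf{B}(\nu,\nu)}{X} = X\theta = \hin{\nabla^\partial\theta}{X}$. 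Since $X$ ranges over $T\partial\Sigma^n$ and $J\mathbf{B}(\nu,\nu)$ lies in $T\partial\Sigma^n$ (it is orthogonal to $\nu$, to $J\nu$, and to $JT\partial\Sigma^n$, because $\hin{J\mathbf{B}(\nu,\nu)}{JW} = \hin{\mathbf{B}(\nu,\nu)}{W} = 0$ for tangent $W$), this proves $\nabla^\partial\theta = J\mathbf{B}(\nu,\nu)$.

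The only genuinely delicate point is \eqref{eq:1.5}: the bookkeeping of which derivative of $\theta$ survives requires using the full symmetry of the Lagrangian second fundamental form $C(X,Y,Z) := \hin{\mathbf{B}(X,Y)}{JZ}$ in all three arguments (a standard consequence of $\bar\nabla J = 0$, the Codazzi equation in flat space, and $J$-invariance of the normal bundle), so that derivatives of $\theta$ in directions tangent to $\partial\Sigma^n$ do not contaminate the normal-normal component. I would isolate that symmetry as the one nontrivial ingredient and then the identity \eqref{eq:1.5} falls out of differentiating \eqref{eq:nu} and keeping only the $\nu$-component; \eqref{eq:1.2}--\eqref{eq:1.4} are pure linear algebra on top of $\bar\nabla_X x = X$ and the Lagrangian condition and carry no real obstacle.
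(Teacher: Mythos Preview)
Your arguments for \eqref{eq:1.2}--\eqref{eq:1.4} are correct and in essence the same as the paper's: the paper packages them by writing the two Gauss decompositions $\bar\nabla_XY=\nabla^\partial_XY+\mathbf{B}^\Sigma(X,Y)+\mathbf{B}(X,Y)$ and $\bar\nabla_XY=\nabla^\partial_XY+\mathbf{B}^\partial(X,Y)-\hin{X}{Y}x$ and equating, while you pair $\bar\nabla_XY$ directly with $\nu$, $J\nu$, $JZ$; the inputs ($\bar\nabla_Xx=X$, $\hin{Y}{JX}=0$, and \eqref{eq:nu}) are identical.

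Your treatment of \eqref{eq:1.5}, however, has a genuine gap. You compute correctly that
\[
\bar\nabla_X\nu=-(X\theta)\,J\nu+\sin\theta\,X+\cos\theta\,JX,
\]
and then dismiss pairing with $J\nu$ as ``gives nothing new''. In fact that pairing \emph{is} the whole proof: it yields $\hin{\bar\nabla_X\nu}{J\nu}=-(X\theta)$, and since $J\nu\in T^{\bot}\Sigma^n$ this is precisely $\hin{\mathbf{B}(X,\nu)}{J\nu}$. Then the full symmetry of the Lagrangian cubic form (which you yourself single out as the key ingredient) gives $\hin{\mathbf{B}(\nu,\nu)}{JX}=-(X\theta)$, i.e.\ $\hin{J\mathbf{B}(\nu,\nu)}{X}=X\theta$, and \eqref{eq:1.5} follows. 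This is exactly the paper's computation.

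The detour you take instead, through $\hin{\bar\nabla_\nu(J\nu)}{X}$, is not valid: the relation $J\nu=-\cos\theta\,x+\sin\theta\,Jx$ holds only on $\partial\Sigma^n$, so you cannot differentiate it in the $\nu$ direction. If one nonetheless carries out the formal computation you indicate, one gets $\bar\nabla_\nu(J\nu)=(\nu\theta)\nu-\cos\theta\,\nu+\sin\theta\,J\nu$, whose pairing with $X\in T\partial\Sigma^n$ is $0$, not $X\theta$; the conclusion you state does not follow from the steps you wrote. The fix is simply to keep the pairing with $J\nu$ that you had computed and combine it with the cubic symmetry.
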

\begin{proof}
On the one hand, the isometric immersions $\partial \Sigma^{n}\hookrightarrow\Sigma^{n}\hookrightarrow\mathbb{B}^{2n}$ implies
\begin{align*}
\bar\nabla_XY=\nabla^{\partial}_XY+\mathbf{B}^\Sigma\left(X,Y\right)+\mathbf{B}\left(X,Y\right).
\end{align*}
On the other hand, the isometric immersions $\partial \Sigma^{n}\hookrightarrow \mathbb{S}^{2n-1}\hookrightarrow\mathbb{B}^{2n}$ gives
\begin{align*}
\bar\nabla_XY=\nabla^{\partial}_XY+\mathbf{B}^\partial\left(X,Y\right)-\hin{X}{Y}x.
\end{align*}
Thus
\begin{align*}
\mathbf{B}^\Sigma\left(X,Y\right)+\mathbf{B}\left(X,Y\right)=\mathbf{B}^\partial\left(X,Y\right)-\hin{X}{Y}x.
\end{align*}
The boundary condition \eqref{eq:nu} gives
\begin{align*}
\mathbf{B}^\Sigma\left(X,Y\right)=&-\sin\theta \hin{X}{Y}\nu,\\
\hin{\mathbf{B}\left(X,Y\right)}{J\nu}=&\cos\theta\hin{X}{Y},\\
\hin{\mathbf{B}\left(X,Y\right)}{JZ}=&\hin{\mathbf{B}^\partial\left(X,Y\right)}{JZ}.
\end{align*}

Finally, a direct calculation yields
\begin{align*}
\hin{\mathbf{B}\left(X,\nu\right)}{J\nu}=&\hin{\bar\nabla_X\nu}{J\nu}\\
=&\hin{-X(\theta)J\nu+\sin\theta X+\cos\theta JX}{J\nu}\\
=&-X(\theta).
\end{align*}
Hence
\begin{align*}
\nabla^{\partial}\theta=J\mathbf{B}\left(\nu,\nu\right).
\end{align*}
\end{proof}

 Define
\begin{align*}
\eta=\iota_{\mathbf{H}}\omega\vert_{\Sigma^{n}},\quad\eta^{\partial}=\iota_{\mathbf{H}^{\partial}}\omega\vert_{\partial\Sigma^{n}}.
\end{align*}
The one forms $\eta$ and $\eta^\partial$ are called the Maslov form of the Lagrangian immersion $\Sigma^{n}\hookrightarrow\mathbb{B}^{2n}$ and the Legendrian immersion $\partial \Sigma^{n}\hookrightarrow\mathbb{S}^{2n-1}$ respectively. Equality
\eqref{eq:1.3} implies that
\begin{align}\label{eq:1.6}
\iota_{\nu}\eta=-\hin{\mathbf{B}\left(\nu,\nu\right)}{J\nu}-(n-1)\cos\theta.
\end{align}
Equalities \eqref{eq:1.4} and \eqref{eq:1.5} yield
\begin{align}\label{eq:1.7}
\eta\vert_{\partial \Sigma^n}=\eta^\partial+\dif\theta.
\end{align}

By \eqref{eq:1.7} we obtain the following very important observation.
\begin{lem}\label{lem:2.2}
If $\Sigma^n$ is a minimal Lagrangian submanifold in $\mathbb{B}^{2n}$ with Legendrian capillary boundary, then $\partial\Sigma^n$ is a minimal Legendrian submanifold in $\mathbb{S}^{2n-1}$.
\end{lem}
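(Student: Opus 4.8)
The plan is to read off the conclusion from the identity \eqref{eq:1.7}, namely $\eta|_{\partial\Sigma^n}=\eta^\partial+\dif\theta$, which couples the Maslov form of $\Sigma^n$ to that of $\partial\Sigma^n$ through the contact angle. First, since $\Sigma^n$ is minimal we have $\mathbf{H}\equiv0$, hence $\eta=\iota_{\mathbf{H}}\omega|_{\Sigma^n}=0$, and in particular $\eta|_{\partial\Sigma^n}=0$. Second, the Legendrian capillary hypothesis says that the contact angle $\theta$ is locally constant, so $\dif\theta=0$. Substituting both facts into \eqref{eq:1.7} yields $\eta^\partial=0$: the Maslov form of the Legendrian immersion $\partial\Sigma^n\hookrightarrow\mathbb{S}^{2n-1}$ vanishes identically.

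It remains to upgrade $\eta^\partial=0$ to $\mathbf{H}^\partial=0$, and here the Legendrian structure of $\partial\Sigma^n$ is essential, since the vanishing of a one-form does not in general force the vanishing of the normal vector it is obtained from. The point is that $\mathbf{H}^\partial$ has no component along the Reeb direction: for $X,Y\in T\partial\Sigma^n$ a short computation using $\bar\nabla_Xx=X$, the parallelism of $J$ in $\mathbb{R}^{2n}$, and the Legendrian relation $\hin{JX}{Y}=0$ gives $\hin{\mathbf{B}^\partial(X,Y)}{Jx}=0$, hence $\hin{\mathbf{H}^\partial}{Jx}=0$. Combined with the orthogonal decomposition $T^{\bot}\!\left(\partial\Sigma^n\hookrightarrow\mathbb{S}^{2n-1}\right)=JT\partial\Sigma^n\oplus\mathrm{span}\set{Jx}$ recorded just before \eqref{eq:nu}, this forces $\mathbf{H}^\partial\in JT\partial\Sigma^n$, say $\mathbf{H}^\partial=JW$ with $W$ a tangent field on $\partial\Sigma^n$.

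Finally, for any $X\in T\partial\Sigma^n$ one has $\eta^\partial(X)=\omega(\mathbf{H}^\partial,X)=\hin{J\mathbf{H}^\partial}{X}=-\hin{W}{X}$, so $\eta^\partial=0$ forces $W=0$, whence $\mathbf{H}^\partial=0$; together with the standing Legendrian assumption on $\partial\Sigma^n$ this is exactly the claimed conclusion. The only step that is not pure bookkeeping is the vanishing of the Reeb component of $\mathbf{H}^\partial$, but this is a standard feature of Legendrian submanifolds of the sphere — equivalently, the cone over $\partial\Sigma^n$ is Lagrangian in $\mathbb{C}^n$ and its mean curvature vector is the horizontal lift of $\mathbf{H}^\partial$ — and in the present setup it follows immediately from the orthogonal decomposition already established above.
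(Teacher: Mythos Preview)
Your argument is correct and follows exactly the route the paper intends: the paper derives the lemma directly from \eqref{eq:1.7}, and your first paragraph is precisely that derivation. The paper leaves the passage from $\eta^\partial=0$ to $\mathbf{H}^\partial=0$ implicit, treating it as a standard Legendrian fact; your second and third paragraphs supply this detail carefully and correctly, so your write-up is if anything more complete than what appears in the paper.
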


\section{Main results and proofs}
In this section, we assume $$x:\Sigma\To\mathbb{B}^4$$ is a minimal Lagrangian surface with Legendrian capillary boundary on $\mathbb{S}^3$, i.e., the contact angle $\theta$ is a local constant. Then by  \autoref{lem:2.2} each component of $\partial \Sigma$ is a Legendrian  geodesic curve and hence a  Legendrian great circle in $\mathbb{S}^3$. When restricted on $\partial\Sigma$, we have from \eqref{eq:1.2} and \eqref{eq:1.6} that
\begin{align}\label{eq:3.1}
\kappa_{g}=\sin\theta,\quad \mathbf{B}\left(\nu,\nu\right)=-\cos\theta J\nu.
\end{align}
Here $\kappa_{g}$ is the geodesic curvature of the curve $\partial\Sigma$ in $\Sigma$. Let $z$ be a local conformal coordinates on $\Sigma$ and consider the cubic form $Q$ on $\Sigma$ defined by
\begin{align*}
Q=\hin{\mathbf{B}\left(\partial_{z},\partial_{z}\right)}{J\partial_{z}}\left(\dif z\right)^3.
\end{align*}
Since $\Sigma$ is minimal, we know that $Q$ is holomorphic. We have

\begin{theorem}\label{mainthm:1}
Let $\Sigma$ be a minimal Lagrangian surface in $\mathbb{B}^4$ with Legendrian free boundary on $\mathbb{S}^3$. Then $\Sigma$ is an equatorial plane disk.
\end{theorem}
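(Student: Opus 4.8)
The plan is to exploit the holomorphic cubic form $Q$ together with the boundary conditions in the free boundary case $\theta = \frac{\pi}{2}$. First I would observe that when $\theta \equiv \frac{\pi}{2}$, the identities in \eqref{eq:3.1} become $\kappa_g = 1$ and $\mathbf{B}(\nu,\nu) = 0$ along $\partial\Sigma$. Next I would show that $Q$ is real on the boundary, up to the appropriate power of $\dif z$: choosing the conformal coordinate $z = s + it$ near a boundary component so that $\set{t = 0}$ is $\partial\Sigma$, $\partial_s$ is tangent to $\partial\Sigma$ and $\partial_t$ is (a multiple of) the inward normal $-\nu$. Using the Codazzi equation for the minimal Lagrangian immersion (so that $\mathbf{B}(\cdot,\cdot,J\cdot)$ is a fully symmetric tensor), together with $\mathbf{B}(\nu,\nu,J\nu) = 0$ and the fact that $\mathbf{B}(X,Y,J\nu) = \cos\theta\,\hin{X}{Y} = 0$ for $X,Y$ tangent to $\partial\Sigma$ from \eqref{eq:1.3}, one finds that $\mathbf{B}(\partial_s,\partial_s, J\partial_t) = 0$ and also $\mathbf{B}(\partial_t,\partial_t,J\partial_t) = 0$ on the boundary. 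Expanding $Q(\partial_z,\partial_z,J\partial_z)$ in terms of the $\partial_s, \partial_t$ components, this forces the imaginary part of the coefficient function of $Q$ to vanish on $\partial\Sigma$, i.e. $Q$ takes real values (in the frame $(\dif z)^3$) along the boundary.

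Once $Q$ is holomorphic on $\Sigma$ and real on $\partial\Sigma$, I would invoke a Schwarz-reflection / maximum-principle argument: a holomorphic cubic differential on a surface with boundary whose boundary values are real must either be identically zero or have no interior zeros and a controlled number of boundary zeros, by the Riemann–Hurwitz / argument-principle count. For the disk $D$ the relevant count (the total order of zeros of a holomorphic section of $K_\Sigma^{\otimes 3}$ with real boundary values) is negative, so $Q \equiv 0$. Here I should be careful to handle the conformal type correctly — the statement of \autoref{mainthm:1} only asserts $\Sigma$ is an equatorial plane disk, so implicitly $\Sigma$ has the topology of a disk; if the paper wants to also rule out other topologies with free boundary (the annulus in particular), I would argue that a nonzero holomorphic $Q$ with real boundary values cannot exist on an annulus either, again by the zero-count, or alternatively defer the annulus case to the capillary theorem. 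So the key output of this step is $\mathbf{B}(\partial_z,\partial_z,J\partial_z) \equiv 0$, hence by symmetry and minimality $\mathbf{B}(\cdot,\cdot,J\cdot) \equiv 0$ on all of $\Sigma$.

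Finally I would translate $Q \equiv 0$ into the geometric conclusion. Since $\mathbf{B}(X,Y,JZ)$ is fully symmetric for a Lagrangian submanifold and $Q \equiv 0$ kills all its components, $\mathbf{B} \equiv 0$, so $\Sigma$ is totally geodesic in $\mathbb{B}^4$, hence a flat disk lying in an affine $2$-plane through the configuration; the boundary condition $\partial\Sigma \subset \mathbb{S}^3$ with $\kappa_g = 1$ and the Legendrian condition then pin it down to an equatorial plane disk through the origin. The main obstacle I anticipate is the boundary analysis in the first paragraph: making precise that the holomorphic cubic form has real boundary values requires carefully combining the Codazzi symmetry of $\mathbf{B}(\cdot,\cdot,J\cdot)$ with all three boundary identities \eqref{eq:1.2}–\eqref{eq:1.5} and the free-boundary specialization, and one must be careful about the choice of conformal coordinate adapted to the boundary and about what ``$Q$ real'' means invariantly (it is the condition that $Q$ extends holomorphically across the boundary by Schwarz reflection). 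Everything after that — the zero-count and the rigidity from $\mathbf{B}\equiv 0$ — is comparatively routine.
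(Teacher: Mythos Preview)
Your overall strategy (exploit the holomorphic cubic form $Q$ and the boundary identities) is the right one, and your boundary computation is correct as far as it goes. The gap is that you extract too little from $\mathbf{B}(\nu,\nu)=0$: this is a \emph{vector} identity, so besides $\hin{\mathbf{B}(\nu,\nu)}{J\nu}=0$ you also get $\hin{\mathbf{B}(\nu,\nu)}{J\partial_s}=0$, i.e.\ $B_{tts}=0$. Minimality ($B_{sss}+B_{tts}=0$ and $B_{sst}+B_{ttt}=0$) then kills \emph{all} components of $\mathbf{B}(\cdot,\cdot,J\cdot)$ along $\partial\Sigma$, so in fact $Q\equiv 0$ on $\partial\Sigma$, not merely $\operatorname{Im} Q=0$. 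Holomorphicity then gives $Q\equiv 0$ on all of $\Sigma$ immediately, with no restriction on the topology and no need for Schwarz reflection or a zero count. This is exactly what the paper does.

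Your detour through ``$Q$ real on $\partial\Sigma$'' plus a zero count is both unnecessary and, in the annulus case, incorrect. The condition $\operatorname{Im} Q=0$ on $\partial\Sigma$ is \emph{not} special to the free boundary case: it holds for every Legendrian capillary boundary (this is precisely the starting point of the proof of \autoref{mainthm:2}), so by itself it cannot force $Q\equiv 0$. Indeed on an annulus the double is a torus, $K^{\otimes 3}$ is trivial, and the Lagrangian catenoids $\Sigma_\lambda$ furnish explicit minimal Lagrangian annuli with $Q$ holomorphic, real on the boundary, and \emph{nonzero} (equal to a nonzero constant $c=-\cos\theta_+$). So your sentence ``a nonzero holomorphic $Q$ with real boundary values cannot exist on an annulus either'' is false. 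Also note that the theorem does \emph{not} assume $\Sigma$ is a disk; that is part of the conclusion. Once $Q\equiv 0$ forces $\Sigma$ to be totally geodesic (hence flat with $\kappa_g=1$), the paper reads off the topology from Gauss--Bonnet:
\[
2\pi\bigl(2(1-\gamma)-r\bigr)=\int_\Sigma\kappa+\int_{\partial\Sigma}\kappa_g=0+2\pi r,
\]
so $\gamma=0$, $r=1$, and then \autoref{LWW} finishes the identification with the equatorial disk.
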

\begin{proof}
If $\Sigma$ is Lagrangian submanifold with Legendrian free boundary, i.e., $\theta=\frac{\pi}{2}$, when restricted on $\partial\Sigma$, by \eqref{eq:3.1} we have
\begin{align*}
\kappa_{g}=1,\quad\mathbf{B}^\partial=0.
\end{align*}
 Hence $Q=0$ along the boundary $\partial\Sigma$, which implies that  $Q=0$ in $\Sigma$. Consequently, $\Sigma$ is totally geodesic in $\mathbb{B}^4$. Applying the Gauss-Bonnet formula we have
\begin{align*}
2\pi\left[2(1-\gamma)-r\right]=2\pi\chi\left(\Sigma\right)=\int_{\Sigma}\kappa+\int_{\partial\Sigma}\kappa_{g}=\int_{\partial\Sigma}=2\pi r,
\end{align*}
where $\kappa$ is the Gauss curvature of $\Sigma$,  $\gamma$ is the genus of $\Sigma$ and $r$ the numbers of the components of $\partial\Sigma$. Thus
\begin{align*}
\gamma+r=1.
\end{align*}
Consequently, $\gamma=0$ and $r=1$. Therefore $\Sigma$ is a topological disk and is an equatorial plane disk according to Li, Wang and Weng's result (\autoref{LWW}).
\end{proof}

In particular, we have proved the following.
\begin{cor}
There is no minimal Lagrangian annulus in $\mathbb{B}^4$ with Legendrian free boundary on $\mathbb{S}^3$.
\end{cor}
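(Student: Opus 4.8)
The plan is to deduce the Corollary at once from \autoref{mainthm:1}. Argue by contradiction: if $\Sigma$ were a minimal Lagrangian annulus in $\mathbb{B}^4$ with Legendrian free boundary on $\mathbb{S}^3$, then \autoref{mainthm:1} would force $\Sigma$ to be an equatorial plane disk. But an annulus is not homeomorphic to a disk --- its Euler characteristic is $0$ rather than $1$, equivalently its boundary has two components while that of a disk has one --- so no such $\Sigma$ can exist.

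For a self-contained argument one may instead extract the contradiction from the proof of \autoref{mainthm:1} itself, bypassing the Li--Wang--Weng classification. In the Legendrian free boundary case $\theta=\frac{\pi}{2}$, formula \eqref{eq:3.1} gives $\mathbf{B}^{\partial}=0$ and $\kappa_g=1$ along $\partial\Sigma$; the vanishing of $\mathbf{B}^{\partial}$ makes the holomorphic cubic form $Q$ vanish on each boundary circle, hence $Q\equiv0$ on $\Sigma$, so $\Sigma$ is totally geodesic in $\mathbb{B}^4$. The Gauss--Bonnet identity displayed in that proof then reads $2\pi[2(1-\gamma)-r]=2\pi r$, i.e.\ $\gamma+r=1$, where $\gamma$ is the genus and $r$ the number of boundary components. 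For an annulus $\gamma=0$ and $r=2$, so $\gamma+r=2\neq1$, a contradiction.

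I would present the first argument as the proof and record the second as a remark. There is essentially no obstacle here: the only substantive ingredient --- namely \autoref{mainthm:1}, equivalently the totally geodesic conclusion together with Gauss--Bonnet --- has already been established. The one point to keep straight is that ``annulus type'' must be read topologically, $\Sigma\cong S^1\times[0,1]$, so that the Euler-characteristic and boundary-component bookkeeping applies without ambiguity.
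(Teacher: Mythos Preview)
Your proposal is correct and matches the paper's approach: the corollary is stated immediately after \autoref{mainthm:1} with only the phrase ``In particular, we have proved the following,'' so the intended argument is precisely the topological incompatibility (equivalently, the Gauss--Bonnet count $\gamma+r=1$ from the proof of \autoref{mainthm:1}) that you spell out. Both of your arguments are faithful unpackings of this.
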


Next we will prove \autoref{conj} in the introduction. Before that, let us recall the example of Lagrangian catenoids and give some detailed descriptions on them, which will be helpful to understand our proofs presented below.

\begin{eg}[Lagrangian catenoids]\label{exam}

 We  identify a real vector  $\left(x^1,x^2,y^1,y^2\right)\in\mathbb{R}^4$ as a complex vector $\left(z^1,z^2\right)=\left(x^1+\sqrt{-1}y^1, x^2+\sqrt{-1}y^2\right)\in\mathbb{C}^2$. The Lagrangian catenoid in $\mathbb{R}^4$ can be identified as the holomorphic curve $\Sigma_{\lambda}$ in $\mathbb{C}^2$, with respect to the standard K\"ahler form $\frac{\sqrt{-1}}{2}\sum_{k=1}^2\dif z^k\wedge\dif\bar z^k$, given by
\begin{align*}
\Sigma_{\lambda}=\set{\left(z,\dfrac{\lambda}{z}\right): z\in\mathbb{C}\setminus\set{0}},
\end{align*}
where $
\lambda\in\mathbb{R}\setminus\set{0}$. Let $\Omega=\dif z^1\wedge\dif z^2$ be the holomorphic symplectic form on $\mathbb{C}^2$. Then
\begin{align*}
\Omega\vert_{\Sigma_{\lambda}}=0.
\end{align*}
Hence $\Sigma_{\lambda}$ is a Lagrangian surface in $\mathbb{C}^2$ with respect to the K\"ahler form $\mathrm{Re}\Omega$ (or $\mathrm{Im}\Omega$).
Notice that the complex structure $J$ associated with the K\"ahler form $\mathrm{Re}\Omega=\dif x^1\wedge\dif x^2-\dif y^1\wedge\dif y^2$ is
\begin{align*}
J\left(x^1,x^2,y^1,y^2\right)=\left(-x^2,x^1,y^2,-y^1\right).
\end{align*}

Let $z=re^{\sqrt{-1}\phi}$ where $(r,\phi)$ is the polar coordinates.  Then
\begin{align*}
\Sigma_{\lambda}=\set{\left(r\cos\phi,\dfrac{\lambda}{r}\cos\phi,r\sin\phi,-\dfrac{\lambda}{r}\sin\phi\right): r>0,\ 0\leq\phi<2\pi.}
\end{align*}
Set
\begin{align*}
X(r,\phi)=\left(r\cos\phi,\dfrac{\lambda}{r}\cos\phi,r\sin\phi,-\dfrac{\lambda}{r}\sin\phi\right).
\end{align*}
The tangent bundle $T\Sigma_{\lambda}$ is spanned by
\begin{align*}
X_{r}=&\left(\cos\phi,-\dfrac{\lambda}{r^2}\cos\phi,\sin\phi,\dfrac{\lambda}{r^2}\sin\phi\right),\\
X_{\phi}=&\left(-r\sin\phi,-\dfrac{\lambda}{r}\sin\phi,r\cos\phi,-\dfrac{\lambda}{r}\cos\phi\right),
\end{align*}
and the normal bundle $T^{\bot}\Sigma_{\lambda}$ is spanned by
\begin{align*}
JX_{r}=&\left(\dfrac{\lambda}{r^2}\cos\phi, \cos\phi, \dfrac{\lambda}{r^2}\sin\phi, -\sin\phi\right),\\
JX_{\phi}=&\left(\dfrac{\lambda}{r}\sin\phi, -r\sin\phi,-\dfrac{\lambda}{r}\cos\phi, -r\cos\phi\right).
\end{align*}
One can check that
\begin{align*}
\abs{X_{r}}^2-\dfrac{1}{r^2}\abs{X_{\phi}}^2=\hin{X_{r}}{X_{\phi}}=0,
\end{align*}
i.e., $X$ is a conformal immersion. Since
\begin{align*}
X_{rr}=&\left(0,\dfrac{2\lambda}{r^3}\cos\phi,0,-\dfrac{2\lambda}{r^3}\sin\phi\right),\\
X_{\phi\phi}=&\left(-r\cos\phi,-\dfrac{\lambda}{r}\cos\phi,-r\sin\phi,\dfrac{\lambda}{r}\sin\phi\right),
\end{align*}
we get
\begin{align*}
\mathbf{B}\left(X_r,X_r\right)=\dfrac{2\lambda}{r^3\abs{X_{r}}^2}JX_{r},\quad \mathbf{B}\left(X_{\phi},X_{\phi}\right)=-\dfrac{2\lambda}{r\abs{X_r}^2}JX_{r}.
\end{align*}
In particular, $\Sigma_{\lambda}$ is a minimal Lagrangian surface in $\mathbb{R}^4$.

Notice that
\begin{align*}
\hin{X_{\phi}}{JX}=0.
\end{align*}
If $0<\abs{\lambda}<\frac{1}{2}$, then $\partial\left(\Sigma_{\lambda}\cap\mathbb{B}^4\right)=\Sigma_{\lambda}\cap\mathbb{S}^3$ has two components
\begin{align*}
S_{\pm}\coloneqq\set{\left(r_{\pm}\cos\phi,\dfrac{\lambda}{r_{\pm}}\cos\phi,r_{\pm}\sin\phi,-\dfrac{\lambda}{r_{\pm}}\sin\phi\right): 0\leq\phi<2\pi},
\end{align*}
where
\begin{align*}
r_{\pm}=\sqrt{\dfrac12\pm\sqrt{\dfrac14-\lambda^2}}.
\end{align*}
These two components are Legendrian.  The unit outward normal vector field of $S_{\pm}\subset \Sigma_{\lambda}\cap\mathbb{B}^4$ is
\begin{align*}
\nu_{\pm}=&\pm\left(r_{\pm}\cos\phi,-\dfrac{\lambda}{r_{\pm}}\cos\phi,r_{\pm}\sin\phi,\dfrac{\lambda}{r_{\pm}}\sin\phi\right)\\
=&\sqrt{1-4\lambda^2}X\mp2\lambda JX.
\end{align*}
Thus, the contact angle $\theta_{\pm}$ along the boundary $S_{\pm}$ satisfies
\begin{align*}
\sin\theta_{\pm}=\sqrt{1-4\lambda^2},\quad\cos\theta_{\pm}=\mp2\lambda.
\end{align*}
In summary, $X$ is a conformal annulus minimal Lagrangian immersion from $A=\set{(r,\phi): r_{-}\leq r\leq r_+, 0\leq\phi<2\pi}$ to $\mathbb{B}^4$ with Legendrian capillary boundary on $\mathbb{S}^3$ with $X(A)=\Sigma_\lambda \ (0<\abs{\lambda}<\frac{1}{2})$. Notice that the contact angle of $\Sigma_\lambda\  (0<\abs{\lambda}<\frac{1}{2})$ can not be $\frac{\pi}{2}$.
\end{eg}

We have
\begin{theorem}\label{mainthm:2}
Assume that $\Sigma$ is an annulus type minimal Lagrangian surface in $\mathbb{B}^4$ with Legendrian capillary boundary on $\mathbb{S}^3$, then $\Sigma$ must be congruent to one of the Lagrangian catenoids $\Sigma_\lambda\ (0<\abs{\lambda}<\frac{1}{2})$.
\end{theorem}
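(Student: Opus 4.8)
The plan is to push the Hopf-cubic-form method behind \autoref{LWW} and \autoref{mainthm:1} one step further, coupling it with a uniqueness (maximum) principle at the boundary. First I would represent the annulus $\Sigma$ conformally as a flat cylinder $C_{L}=[0,L]\times(\mathbb{R}/2\pi\mathbb{Z})$ with coordinate $w=u+\sqrt{-1}\,v$ and induced metric $e^{2\psi}\abs{\dif w}^{2}$, so that $\partial\Sigma=\set{u=0}\cup\set{u=L}$ with $\partial_{v}$ tangent and $\partial_{u}$ normal to $\partial\Sigma$. Writing $Q=g(w)(\dif w)^{3}$, minimality makes $g$ holomorphic. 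The key input is \autoref{lem:2.2}: since $\partial\Sigma$ is minimal Legendrian in $\mathbb{S}^{3}$, $\mathbf{H}^{\partial}=0$, so \eqref{eq:1.4} gives $\hin{\mathbf{B}(\tau,\tau)}{J\tau}=0$ along $\partial\Sigma$, and combined with \eqref{eq:3.1} this yields $g=\pm\tfrac12 e^{3\psi}\cos\theta$ on $\partial C_{L}$ — in particular $g$ is \emph{real} on both boundary circles. A holomorphic function on a cylinder that is real on both boundary circles extends, by iterated Schwarz reflection, to a holomorphic function on a torus, so $g\equiv a_{0}$ for a real constant $a_{0}$. Moreover $a_{0}\ne 0$: if $a_{0}=0$ then the symmetric trace-free cubic form $C(X,Y,Z)=\hin{\mathbf{B}(X,Y)}{JZ}$ vanishes, forcing $\mathbf{B}\equiv 0$, so $\Sigma$ is a piece of an affine plane, hence a disk — contradicting the annulus hypothesis. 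Since the contact angle $\theta$ is locally constant, the relation $g=\pm\tfrac12 e^{3\psi}\cos\theta$ with $g\equiv a_{0}$ forces $\psi$ (and $\theta$) to be constant on each boundary component; as each component is an embedded Legendrian great circle of length $2\pi$, in fact $\psi\equiv 0$ there, whence $\cos\theta=\pm 2a_{0}$ on the two components with opposite signs and $0<\abs{a_{0}}<\tfrac12$. Also $\kappa_{g}=\sin\theta$ is constant, so $\partial_{u}\psi$ is constant on each boundary circle as well.

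With the boundary data pinned down, I would bring in the Lagrangian catenoid $\Sigma_{a_{0}}$ of \autoref{exam}, which exists since $0<\abs{a_{0}}<\tfrac12$; a computation as in \autoref{exam} shows its cubic form equals $a_{0}(\dif w)^{3}$ in the coordinate $w=\ln r+\sqrt{-1}\,\phi$, its boundary components are Legendrian great circles, and its contact angles satisfy $\cos\theta_{\pm}=\mp 2a_{0}$, matching the data of $\Sigma$. Since $U(2)$ acts transitively on Legendrian great circles of $\mathbb{S}^{3}$, I would move $\Sigma_{a_{0}}$ by a unitary $\Phi$ so that one boundary component of $\Phi(\Sigma_{a_{0}})$ coincides, as an oriented curve, with a boundary component $\gamma$ of $\Sigma$, with the same contact angle; by \eqref{eq:nu}, along $\gamma$ the tangent plane of such a surface is $\mathrm{span}\set{\tau,\sin\theta\,x+\cos\theta\,Jx}$, so $\Sigma$ and $\Phi(\Sigma_{a_{0}})$ carry the same Cauchy data (curve plus tangent planes) along $\gamma$. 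A strong maximum principle / Bj\"orling-type uniqueness for minimal surfaces with boundary — applicable because $\gamma$, being minimal, is a geodesic great circle and because minimal surfaces are real analytic up to a real-analytic boundary — then forces $\Sigma=\Phi(\Sigma_{a_{0}})$ near $\gamma$, hence everywhere by analytic continuation, so $\Sigma$ is congruent to $\Sigma_{a_{0}}$.

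The first part is in the spirit of the Hopf-differential arguments underlying \autoref{LWW}, and once \autoref{lem:2.2} is used it is essentially a computation. I expect the main obstacle to be the second part: converting the boundary data furnished by the cubic form into a genuine identification of $\Sigma$ with a catenoid. This is the ``quite subtle'' maximum-principle step, and it must exploit \emph{both} that $\Sigma$ is minimal and that $\partial\Sigma$ is minimal — the latter being what makes $\gamma$ a geodesic great circle (so that it can be matched with a catenoid boundary) along which the full first-order contact of $\Sigma$ is determined. An equivalent but more PDE-theoretic route would combine the Gauss equation $K=-\tfrac12\abs{\mathbf{B}}^{2}$ with $\abs{\mathbf{B}}^{2}=16 a_{0}^{2}e^{-6\psi}$ to get the Liouville-type equation $\Delta_{0}\psi=8a_{0}^{2}e^{-4\psi}$ on $C_{L}$, note that $\partial_{v}\psi$ has vanishing Cauchy data on $\partial C_{L}$ by the first part, conclude $\psi=\psi(u)$ by unique continuation from the boundary, integrate the ODE with the boundary conditions to recover the catenoid conformal factor, and finish via the fundamental theorem for Lagrangian immersions into $\mathbb{C}^{2}$ — with the unique-continuation/maximum-principle step again the crux.
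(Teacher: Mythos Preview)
Your first stage—showing that the cubic form is a real constant $a_0\ne 0$ and hence that each boundary component is a Legendrian great circle on which the conformal factor is constant—is exactly the paper's opening move (the paper phrases it as ``imaginary part of $z^3\hin{\mathbf{B}(\partial_z,\partial_z)}{J\partial_z}$ vanishes on $\partial\Sigma$, hence on $\Sigma$, hence constant''; your Schwarz-reflection-to-a-torus argument is an equivalent packaging).

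The divergence is in the second stage. You propose to finish either by a Bj\"orling-type unique continuation from matched Cauchy data along one boundary great circle, or by deriving and integrating the Liouville ODE for the conformal factor. The paper instead exploits the harmonic map equation $\Delta_0 X=0$ \emph{directly}: once you know that both boundary components are great circles parametrized (after your step~1) proportionally to arclength by $\phi$, the Dirichlet data for $X$ on the flat annulus contain only the first Fourier mode in $\phi$, so the unique harmonic extension is $X(r,\phi)=(\vec a\,r+\lambda\vec b/r)\cos\phi+(\vec c\,r-\lambda\vec d/r)\sin\phi$. The remaining work is elementary linear algebra: conformality, the Lagrangian condition, and $\abs{X}=1$ on $\partial\Sigma$ force the Gram relations among $\vec a,\vec b,\vec c,\vec d$ that identify $X$ with a unitary image of $\Sigma_\lambda$. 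Your routes are sound (Bj\"orling uniqueness for minimal, hence real-analytic, surfaces in $\mathbb{C}^2$ applies since the Lagrangian tangent plane along $\gamma$ is determined by $\tau$ and $\nu=\sin\theta\,x+\cos\theta\,Jx$), but the paper's approach is more elementary—its ``maximum principle'' is just uniqueness for the Dirichlet problem for $\Delta_0$—and avoids invoking any analytic-continuation machinery or ODE integration. Conversely, your Bj\"orling route is conceptually cleaner and would transplant more readily to settings where the harmonic map equation does not separate so conveniently.
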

\begin{proof}
 Assume that $\Sigma$ is given by a conformal  minimal immersion $X$ from an annulus $$A=\set{(r, \phi): r_{-}\leq r\leq r_+, 0\leq \phi< 2\pi}\subset\mathbb{R}^2$$ 
to $\mathbb{B}^4$, where we use polar coordinates $(r, \phi)$ on $A$. Denote by
$$S_{\pm}\coloneqq\set{X(r_{\pm}, \phi): 0\leq \phi< 2\pi}$$ 
 the boundary of $\Sigma$. Then
\begin{align*}
z^3\hin{\mathbf{B}\left(\partial_{z},\partial_{z}\right)}{J\partial_{z}}=\dfrac12r^3\left(\hin{\mathbf{B}\left(\partial_{r},\partial_{r}\right)}{J\partial_{r}}-\dfrac{\sqrt{-1}}{r^3}\hin{\mathbf{B}\left(\partial_{\phi},\partial_{\phi}\right)}{J\partial_{\phi}}\right)
\end{align*}
is holomorphic in $\Sigma$ and the imaginary part vanishes on $\partial\Sigma$ and hence it vanishes on $\Sigma$. Therefore this holomorphic function must be a constant by maximum principle, which can not be zero (cf. \autoref{mainthm:1}).  
 Consequently, there is a nonzero real constant $c$ such that
\begin{align*}
\mathbf{B}\left(\partial_{r},\partial_{r}\right)=\dfrac{c}{\abs{\partial_r}^2r^3}J\partial_{r}.
\end{align*}
When restricted on $\partial\Sigma=S_+\cup S_-$, according to \eqref{eq:1.3}, we have 
\begin{align*}
c=\mp r^3\abs{\partial_{r}}^3\cos\theta_{\pm}.
\end{align*}
By  \autoref{lem:2.2} we see that both $S_{\pm}$ are Legendrian geodesics , and hence are Legendrian great circles on $\mathbb{S}^3$. Consequently
\begin{align*}
c=-\cos\theta_+.
\end{align*}
Similarly, we have $c=\cos\theta_-$. Therefore 
\begin{align*}
\cos\theta_++\cos\theta_-=0,\quad  \sin\theta_+=\sin\theta_-.
\end{align*}
Let $\lambda\in (-1/2,0)\cup(0,1/2)$ be the unique real number determined by
\begin{align*}
   \sin\theta_\pm=\sqrt{1-4\lambda^2}, \cos\theta_\pm=\mp2\lambda. 
\end{align*}

Since $X$ is minimal we have $$\Delta_gX=0,$$ where $g=e^{2u}(dr^2+r^2d\phi^2)$ is a conformal metric induced on $X(A)$. Let $\Delta_0$ be the metric on the flat annulus $A$, then 
\begin{align}\label{eq:minimal}
  \Delta_0X=0.  
\end{align}
Since both $S_{\pm}$ are  Legendrian great circles on $\mathbb{S}^3$, there exist unit vectors $\vec{a}_\pm, \vec{b}_\pm\in \mathbb{R}^4$ with $\hin{\vec{a}_\pm}{ \vec{b}_\pm}=\hin{\vec{a}_\pm}{J\vec{b}_\pm}=0$, such that 
\begin{align}\label{boundary}
S_{\pm}=\vec{a}_\pm\cos\phi+\vec{b}_\pm\sin\phi.
\end{align}
  Then by applying the maximum principle to \eqref{eq:minimal} with boundary conditions \eqref{boundary}, we have
\begin{align*}
X=X(r,\phi)=\left(\vec{a}r+\dfrac{\lambda \vec{b}}{r}\right)\cos\phi+\left(\vec{c}r-\dfrac{\lambda \vec{d}}{r}\right)\sin\phi,
\end{align*}
where $\vec{a}, \vec{b}, \vec{c}, \vec{d}\in \mathbb{R}^4$ are uniquely determined by $\theta_\pm, r_\pm$ and $\vec{a}_\pm, \vec{b}_\pm$. Direct computations show that
\begin{align*}
X_{r}=&\left(\vec{a}-\dfrac{\lambda \vec{b}}{r^2}\right)\cos\phi+\left(\vec{c}+\dfrac{\lambda \vec{d}}{r^2}\right)\sin\phi,\\
X_{\phi}=&-\left(\vec{a}r+\dfrac{\lambda \vec{b}}{r}\right)\sin\phi+\left(\vec{c}r-\dfrac{\lambda \vec{d}}{r}\right)\cos\phi.
\end{align*}
Thus
\begin{align*}
\abs{X_r}^2-\dfrac{1}{r^2}\abs{X_{\phi}}^2=&\left(\abs{\vec{a}-\dfrac{\lambda \vec{b}}{r^2}}^2-\abs{\vec{c}-\dfrac{\lambda \vec{d}}{r^2}}^2\right)\cos^2\phi+\left(\abs{\vec{c}+\dfrac{\lambda \vec{d}}{r^2}}^2-\abs{\vec{a}+\dfrac{\lambda \vec{b}}{r^2}}^2\right)\sin^2\phi\\
&+2\left(\hin{\vec{a}-\dfrac{\lambda \vec{b}}{r^2}}{\vec{c}+\dfrac{\lambda \vec{d}}{r^2}}+\hin{\vec{a}+\dfrac{\lambda \vec{b}}{r^2}}{\vec{c}-\dfrac{\lambda \vec{d}}{r^2}}\right)\sin\phi\cos\phi.
\end{align*}
It follows from $\abs{X_r}^2-\frac{1}{r^2}\abs{X_{\phi}}^2=0$ that
\begin{align}\label{eq:3.5}
\abs{\vec{a}}=\abs{\vec{c}},\quad \abs{\vec{b}}=\abs{\vec{d}},\quad\hin{\vec{a}}{\vec{b}}=\hin{\vec{c}}{\vec{d}},\quad\hin{\vec{a}}{\vec{c}}=0,\quad \hin{\vec{b}}{\vec{d}}=0.
\end{align}
Then by \eqref{eq:3.5}
\begin{align*}
\hin{X_r}{\dfrac{1}{r}X_{\phi}}=&\hin{\vec{a}-\dfrac{\lambda \vec{b}}{r^2}}{\vec{c}-\dfrac{\lambda \vec{d}}{r^2}}\cos^2\phi-\hin{\vec{c}+\dfrac{\lambda \vec{d}}{r^2}}{\vec{a}+\dfrac{\lambda \vec{b}}{r^2}}\sin^2\phi\\
&+\left(\hin{\vec{c}+\dfrac{\lambda \vec{d}}{r^2}}{\vec{c}-\dfrac{\lambda \vec{d}}{r^2}}-\hin{\vec{a}-\dfrac{\lambda \vec{b}}{r^2}}{\vec{a}+\dfrac{\lambda \vec{b}}{r^2}}\right)\sin\phi\cos\phi\\
=&-\dfrac{\lambda}{r^2}\left(\hin{\vec{a}}{\vec{d}}+\hin{\vec{b}}{\vec{c}}\right),
\end{align*}
which implies from $\hin{X_r}{\frac{1}{r}X_{\phi}}=0$  that
\begin{align}\label{eq:3.6}
\hin{\vec{a}}{\vec{d}}+\hin{\vec{b}}{\vec{c}}=0.
\end{align}
Moreover
\begin{align*}
\abs{X}^2=&\abs{\vec{a}r+\dfrac{\lambda \vec{b}}{r}}^2\cos^2\phi+\abs{\vec{c}r-\dfrac{\lambda \vec{d}}{r}}^2\sin^2\phi+2\hin{\vec{a}r+\dfrac{\lambda \vec{b}}{r}}{\vec{c}r-\dfrac{\lambda \vec{d}}{r}}\sin\phi\cos\phi.
\end{align*}
When restricted on the boundary $S_\pm$ where $r=r_{\pm}$, we have $\abs{X}=1$, together with\eqref{eq:3.5} and  \eqref{eq:3.6} we get 
\begin{align}
\abs{\vec{a}}^2r_\pm^2+\frac{\lambda^2\abs{\vec{b}}^2}{r_\pm^2}&=1,\label{eq:3.7}\\
 \hin{\vec{a}}{\vec{d}}=\hin{\vec{b}}{\vec{c}}=\hin{\vec{a}}{\vec{b}}=\hin{\vec{c}}{\vec{d}}&=0. \label{eq:3.8}
\end{align}
In addition, since
\begin{align*}
r\hin{X_r}{X}=\abs{\vec{a}}^2r^2-\frac{\lambda^2\abs{\vec{b}}^2}{r^2},
\end{align*}
when restricted on the boundary $S_\pm$ where $r=r_{\pm}$ we have
\begin{align}\label{eq:3.9}
\abs{\vec{a}}^2r_\pm^2-\frac{\lambda^2\abs{\vec{b}}^2}{r_\pm^2}=\sin\theta_{\pm}.
\end{align}
By \eqref{eq:3.7} and \eqref{eq:3.9}, recall that $\sin\theta_\pm=\sqrt{1-4\lambda^2}$, we obtain that
\begin{align*}
    \abs{\vec{a}}\abs{\vec{b}}=1.
\end{align*}
Now denote $\eta=\abs{\vec{a}}>0$, by \eqref{eq:3.5}, \eqref{eq:3.8} and \eqref{eq:3.9} we have
\begin{align}\label{eqmain:1}
\abs{\vec{a}}=\abs{\vec{c}}=\eta, \abs{\vec{b}}=\abs{\vec{d}}=\frac{1}{\eta},\quad \hin{\vec{a}}{\vec{d}}=\hin{\vec{a}}{\vec{c}}=\hin{\vec{a}}{\vec{b}}=\hin{\vec{b}}{\vec{c}}=\hin{\vec{b}}{\vec{d}}=\hin{\vec{c}}{\vec{d}}=0.
\end{align}
Moreover,
\begin{align*}
\hin{X_r}{\dfrac{1}{r}JX_{\phi}}=&\hin{\vec{a}-\dfrac{\lambda \vec{b}}{r^2}}{J\vec{c}-\dfrac{J\lambda \vec{d}}{r^2}}\cos^2\phi-\hin{\vec{c}+\dfrac{\lambda \vec{d}}{r^2}}{J\vec{a}+\dfrac{J\lambda \vec{b}}{r^2}}\sin^2\phi\\
&+\left(\hin{\vec{c}+\dfrac{\lambda \vec{d}}{r^2}}{J\vec{c}-\dfrac{J\lambda \vec{d}}{r^2}}-\hin{\vec{a}-\dfrac{\lambda \vec{b}}{r^2}}{J\vec{a}+\dfrac{J\lambda \vec{b}}{r^2}}\right)\sin\phi\cos\phi.
\end{align*}
Therefore, by $\hin{X_r}{\frac{1}{r}JX_{\phi}}=0$ we obtain
\begin{align}\label{eq:3.11}
\hin{\vec{a}}{J\vec{c}}=\hin{\vec{b}}{J \vec{d}}=0,\quad \hin{\vec{a}}{J \vec{d}}+\hin{\vec{b}}{J\vec{c}}=0,\quad \hin{\vec{a}}{J \vec{b}}+\hin{\vec{c}}{J\vec{d}}=0.
\end{align}
Thus by \eqref{eq:3.11} we have
\begin{align*}
\hin{\dfrac{1}{r}X}{\dfrac{1}{r}JX_{\phi}}=&\hin{\vec{a}+\dfrac{\lambda \vec{b}}{r^2}}{J\vec{c}-\dfrac{J\lambda \vec{d}}{r^2}}\cos^2\phi-\hin{\vec{c}-\dfrac{\lambda \vec{d}}{r^2}}{J\vec{a}+\dfrac{J\lambda \vec{b}}{r^2}}\sin^2\phi\\
&+\left(\hin{\vec{c}-\dfrac{\lambda \vec{d}}{r^2}}{J\vec{c}-\dfrac{J\lambda \vec{d}}{r^2}}-\hin{\vec{a}+\dfrac{\lambda \vec{b}}{r^2}}{J\vec{a}+\dfrac{J\lambda \vec{b}}{r^2}}\right)\sin\phi\cos\phi\\
=&\dfrac{2\lambda}{r^2}\hin{\vec{b}}{J\vec{c}},
\end{align*}
which implies from $\hin{\frac{1}{r}X}{\frac{1}{r}JX_{\phi}}=0$ on $\partial \Sigma$ and \eqref{eq:3.11} that
\begin{align*}
\hin{\vec{a}}{J\vec{d}}=\hin{\vec{b}}{J\vec{c}}=0.
\end{align*}

In addition,
\begin{align*}
r\hin{X_r}{JX}=2\lambda\hin{\vec{a}}{J\vec{b}}.
\end{align*}
When restricted on the boundary $r=r_{\pm}$, since
\begin{align*}
r\hin{X_r}{JX}=\cos\theta_{\pm}=\mp2\lambda,
\end{align*}
we conclude that
\begin{align}\label{eq:3.11'}
\hin{\vec{a}}{J\vec{b}}=-1.
\end{align}

Therefore, by \eqref{eqmain:1}, \eqref{eq:3.11} and \eqref{eq:3.11'}, the real metric $O=\begin{pmatrix}\vec{a}&\vec{b}&\vec{c}&\vec{d}
\end{pmatrix}$ satisfies
\begin{align*}
O^TO=\begin{pmatrix}&\eta^2&0&0&0\\
&0&\frac{1}{\eta^2}&0&0\\
&0&0&\eta^2&0\\
&0&0&0&\frac{1}{\eta^2}
\end{pmatrix},\quad O^TJO=J=\begin{pmatrix}&0&-1&0&0\\
&1&0&0&0\\
&0&0&0&1\\
&0&0&-1&0
\end{pmatrix}.
\end{align*}
Set
\begin{align*}
    Q=\begin{pmatrix}&\eta&0&0&0\\
&0&\frac{1}{\eta}&0&0\\
&0&0&\eta&0\\
&0&0&0&\frac{1}{\eta}
\end{pmatrix}
\end{align*}
and let $P=OQ$, then we see that \begin{align*}P^TP=\mathrm{Id},\quad  P^TJP=J,
\end{align*} 
hence $P$ is a rigidity motion of $\mathbb{R}^4$ which preserves the complex structure $J$.

Finally, since $\Sigma_{\lambda}$ is invariant under the transformation $Q$, $\Sigma=O(\Sigma_{\lambda})=OQ(\Sigma_{\lambda})=P(\Sigma_{\lambda})\  (0<\abs{\lambda}<\frac{1}{2})$, we conclude that $\Sigma$ is congruent to $\Sigma_{\lambda}\  (0<\abs{\lambda}<\frac{1}{2})$. This completes the proof of   \autoref{mainthm:2}. 

\end{proof}


\end{document}